\DeclareMathOperator{\Gal}{Gal}
\DeclareMathOperator{\Spec}{Spec}
\DeclareMathOperator{\tr}{tr}
\DeclareMathOperator{\Ker}{Ker}
\DeclareMathOperator{\re}{Re}
\newtheorem{thm}{Theorem}[section]
\newtheorem{lem}[thm]{Lemma}
\newtheorem{cor}[thm]{Corollary}
\newtheorem{question}[thm]{Question}
\theoremstyle{definition}
\newtheorem{defi}[thm]{Definition}
\title
{
Regular graphs to induce even periodic Grover walks
}
\author{Sho Kubota\thanks{
Department of Information Systems,
Faculty of Information Science and Technology,
Osaka Institute of Technology,
Osaka 573-0196, Japan.
\texttt{sho.kubota@oit.ac.jp}}
\and Hiroto Sekido\thanks{
Graduate School of Environment Information Sciences, Yokohama National University, Hodogaya,
Yokohama 240-8501 Japan.
\texttt{sekido-hiroto-zk@ynu.jp}}
\and  Kiyoto Yoshino\thanks{
Department of Computer Science,
Faculty of Applied Information Science,
Hiroshima Institute of Technology,
Saeki Ward, Hiroshima, 731-5143, Japan.
\texttt{k.yoshino.n9@cc.it-hiroshima.ac.jp}
}
}
\date{}
\begin{document}
\maketitle
\begin{abstract}
The interest of this paper is a characterization of graphs that induce periodic Grover walks with given periods. In previous studies, Yoshie has shown that the only graphs that induce odd periodic Grover walks are cycle graphs. However, this problem is largely unsolved for even periods. In this study, we show that regular graphs that induce $2l$-periodic Grover walks are also cycle graphs in most cases, where $l$ is an odd integer. The proof uses Galois theory.
\vspace{8pt} \\
{\it Keywords:} Grover walk, quantum walk, periodicity, cyclotomic field\\
{\it MSC 2020 subject classifications:} 81Q99; 05C50
\end{abstract}

\section{Introduction}

Quantum walks are the quantum mechanical counterpart of classical random walks.
Quantum walks are a type of quantum algorithm that has received significant attention in recent years due to their potential for use in a variety of applications,
such as optimization~\cite{Ya}, search~\cite{Am}, and simulation~\cite{Mo}.

One of the interesting properties of quantum walks is periodicity.
It refers to a phenomenon that quantum states return to their initial states at a certain number of steps, which can occur due to the circumstance that the time evolution matrix of a quantum walk is unitary.
Recently, Panda and Benjamin proposed a quantum cryptographic protocol focusing on periodicity of quantum walks~\cite{AC}, and applications of periodicity have also attracted attention.

The model treated in this paper is the Grover walk, which is a typical discrete-time quantum walk on a graph.
A notable feature of Grover walks is that it is defined from a graph alone.
Not all graphs induce a periodic Grover walk, but rather such graphs are rare.
Hence, we are interested in characterizing graphs that induce periodic Grover walks.
The typical study of periodicity is to characterize graphs
in a specific graph class~\cite{YNIE, S, KSTY, Y2019}.
On the other hand,
there is also the point of view of characterizing graphs
that induce a given period,
and such studies have been carried out by Yoshie~\cite{Y1, Y2}.
An interesting result he showed is that the graphs to induce odd periods are cycle graphs only~\cite{Y2}.
We attempt to develop the work he has done and focus on even periods.
The even-periodic graphs have not been characterized except for $2$-periodic and $4$-periodic graphs.
Graphs that induce $6$ and $8$ periods distinct from cycle graphs are constructed in~\cite{Y1}.
In this study, we restrict graphs we treat to regular graphs and show that regular graphs that induce $2l$-periodic Grover walks are also only cycle graphs in many cases, where $l$ is an odd integer.
More precisely, we show the following.
See later sections for definitions and terminologies.

\begin{thm} \label{0616-1}
Let $G$ be a $k$-regular graph, and let $l$ be an odd integer. 
If $G$ is a $2l$-periodic graph, then one of the following holds.
\begin{enumerate}[(1)]
    \item $G$ is the cycle graph $C_{2l}$ with $2l$ vertices.
    \item $k=3$ and $l$ is a multiple of $3$.
\end{enumerate}
\end{thm}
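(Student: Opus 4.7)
The plan is to reduce the problem to a purely spectral one on the adjacency matrix $A = A(G)$, and then constrain the resulting spectrum by combining the Galois action on cyclotomic integers with an algebraic-integrality condition. The starting point is the spectral mapping for the Grover walk on a $k$-regular graph: the eigenvalues of the time-evolution unitary $U$ are $\pm 1$ (with multiplicities determined by the cycle rank and bipartiteness of $G$) together with the numbers $(\lambda \pm i\sqrt{k^{2} - \lambda^{2}})/k$ as $\lambda$ runs over the eigenvalues of $A$. Imposing $U^{2l} = I$ forces each of the latter to be a $2l$-th root of unity, so every adjacency eigenvalue must satisfy $\lambda = k\cos(\pi j/l)$ for some $j \in \{0, 1, \ldots, l\}$. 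Hence
$$\mathrm{spec}(A) \subseteq S_{k,l} := \{k\cos(\pi j/l) : 0 \le j \le l\}.$$

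Since $A$ has integer entries, its characteristic polynomial lies in $\mathbb{Z}[x]$, and the multiset of eigenvalues is invariant under $\Gal(\mathbb{Q}(\zeta_{2l})/\mathbb{Q}) \cong (\mathbb{Z}/2l\mathbb{Z})^{\times}$, which permutes $S_{k,l}$ by $\cos(\pi j/l) \mapsto \cos(\pi aj/l)$. Because the period is exactly $2l$, some eigenvalue of $U$ is a primitive $2l$-th root of unity, and hence some $\lambda_{0} = k\cos(\pi j_{0}/l)$ with $\gcd(j_{0}, 2l) = 1$ belongs to $\mathrm{spec}(A)$; its Galois orbit contributes $\phi(2l)/2$ distinct eigenvalues that must all lie in $\mathrm{spec}(A)$. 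The second ingredient is algebraic integrality of each such $\lambda$. Writing $2\cos(\pi j/l) = \zeta_{2l}^{\,j} + \zeta_{2l}^{-j} \in \mathcal{O}_{\mathbb{Q}(\zeta_{2l})}$, the identity $k\cos(\pi j/l) = \tfrac{k}{2}(2\cos(\pi j/l))$ makes integrality automatic for even $k$; for odd $k$, it is equivalent to the congruence $\zeta_{2l}^{\,j} + \zeta_{2l}^{-j} \equiv 0 \pmod{2}$ in the real ring $\mathbb{Z}[\zeta_{2l} + \zeta_{2l}^{-1}]$, a condition I would analyse via the factorisation of $(2)$ in this ring.

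Combining these ingredients, I would argue that when $k$ is odd and $(k, l)$ is not the exceptional pair with $k = 3$ and $3 \mid l$, the congruence in the previous paragraph fails somewhere on the primitive Galois orbit, so that the orbit cannot be contained in $\mathrm{spec}(A) \cap \mathcal{O}_{\overline{\mathbb{Q}}}$, contradicting the fact that the period is exactly $2l$. For even $k$, algebraic integrality gives no leverage, and one must instead exploit the moment identities $\mathrm{tr}(A) = 0$ and $\mathrm{tr}(A^{2}) = nk$, together with the presence of the full primitive orbit of size $\phi(2l)/2$, to show that only $k = 2$ is compatible, in which case $\mathrm{spec}(A)$ coincides with the full spectrum of $C_{2l}$ and $G = C_{2l}$ follows.

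The main obstacle is the prime-$2$ analysis in the real cyclotomic ring $\mathbb{Z}[\zeta_{2l} + \zeta_{2l}^{-1}]$: one has to pin down exactly when $\zeta_{2l}^{\,j} + \zeta_{2l}^{-j} \equiv 0 \pmod{2}$ can hold on the primitive orbit, and to see that this happens non-trivially in the family $k = 3$, $3 \mid l$, but nowhere else among odd $k$. A secondary difficulty is the even case $k \ge 4$, where one must turn the moment identities into a spectral contradiction with any non-cycle spectrum, while also carefully accounting for the $\pm 1$ eigenvalues of $U$ coming from the Grover walk structure rather than from $\mathrm{spec}(A)$.
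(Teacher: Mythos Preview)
Your overall setup---spectral mapping, Galois-invariance of $\Spec(A)$, and the observation that exact period $2l$ forces the primitive conjugacy class $\{k\cos(\pi j/l):\gcd(j,2l)=1\}$ to appear in $\Spec(A)$---is correct and matches the paper. But your two-case strategy diverges from the paper's argument, and your prediction of what each case yields is off.

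\textbf{Odd $k$ via integrality.} This idea is genuinely different from the paper, and in fact \emph{stronger} than what you claim. For odd $k$ one has $k\cos(\pi j/l)\in\mathcal{O}_{\overline{\mathbb Q}}$ iff $\cos(\pi j/l)\in\mathcal{O}_{\overline{\mathbb Q}}$, i.e.\ iff $\zeta_{2l}^{j}+\zeta_{2l}^{-j}\equiv 0\pmod 2$ in $\mathbb Z[\zeta_l]$ (recall $\mathbb Z[\zeta_{2l}]=\mathbb Z[\zeta_l]$ since $l$ is odd). Reducing mod $2$ and using that $2$ is unramified in $\mathbb Z[\zeta_l]$, this congruence reads $\zeta_l^{2a}=1$ in each residue field, with $a=j(l{+}1)/2$ coprime to $l$; hence it fails for every $l>1$. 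So integrality alone kills \emph{all} odd $k\ge 3$, including $k=3$ with $3\mid l$. You have mis-anticipated the outcome: there is no carve-out here, and carrying this through would actually answer the paper's open Question (no $2l$-periodic $3$-regular graphs exist for odd $l>1$). The paper does not use this argument at all.

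\textbf{Even $k$ via moments.} This is exactly the paper's route, except that the paper applies it uniformly to all $k$ rather than only to even $k$, and---crucially---the paper \emph{carries out} the computation you leave as a plan. Writing the eigenvalue multiplicities as constants $m_d$ on each Galois orbit $G_{2l}\alpha_d^{(2l)}$ ($d\in D(2l)$), the paper evaluates $n=\sum |G_{2l}\alpha_d^{(2l)}|m_d$ and $\tr T^2=\sum S^2(G_{2l}\alpha_d^{(2l)})m_d$ in closed form via $\phi$ and the M\"obius function, using $S(G_m\zeta_m)=\mu(m)$ and the collapse $\phi(2l/d')=\phi(l/d')$ for odd $l$. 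Equating $\tr T^2=n/k$ then yields a single linear relation whose sign analysis gives the sharp bound
\[
k \;<\; 2+\frac{2}{p_1-2},
\]
where $p_1$ is the smallest prime factor of $l$. This gives $k\le 3$ when $p_1=3$ and $k=2$ otherwise; it is precisely this bound, not any integrality obstruction, that produces the exceptional case $(k,l)=(3,3m)$ in the theorem. Your sketch does not indicate how to extract such a bound from $\tr A=0$ and $\tr A^2=nk$, and your worry about ``accounting for the $\pm1$ eigenvalues of $U$'' is misplaced: the moment identity lives entirely on $T$ (equivalently $A$), not on $U$.

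In short: your odd-$k$ branch is a different and sharper argument than the paper's (but you misread its conclusion), while your even-$k$ branch is the paper's argument minus the actual computation that makes it work.
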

We remark that this theorem implies if $l$ is not a multiple of $3$,
then the $2l$-periodic regular graph is the cycle graph $C_{2l}$ only.





This paper is organized as follows.
Section~2 is mainly preparation.
Terms and facts on spectral graph theory and Grover walks are introduced.
In Section~3,
we determine the $6$-periodic regular graphs by combining some results of previous studies.
In Section~4, we derive our main theorem (Theorem~\ref{0616-1}).
We study $2l$-periodic regular graphs, where $l$ is an odd integer, by using the Galois group of cyclotomic fields.
Section~\ref{0630-1} summarizes this paper and introduces several open problems.

\section{Preliminaries}

\subsection{Graphs}

Let $G=(V, E)$ be a finite, simple, connected, and undirected graph with vertex set $V$ and edge set $E$. 
For $x, y \in V$, let $xy$ denote the \emph{directed} edge $(x,y) \in V \times V$ from $x$ to $y$. 
Also define $\mathcal{A}=\mathcal{A}(G)$ as the set $\{ xy, yx \mid \{x,y\} \in E\}$. 
Let $a=xy \in \mathcal{A}$ be a directed edge.
Let $a^{-1}$ denote the directed edge $yx$.
Let $o(a)$ and  $t(a)$ be the \emph{origin} $x$ and \emph{terminus} $y$ of $a$, respectively. The \emph{degree} of a vertex $x \in V$ for a graph $G$ is written as $\deg_G x$. The \emph{adjacency matrix} $A=A(G)\in \mathbb{C}^{V\times V}$ of a graph $G$ is defined by
\[A_{x, y}=\begin{cases}
1 & \{x, y\}\in E, \\
0 &\text{otherwise}. 
\end{cases}\]
We define the \emph{degree matrix} $D=D(G) \in \mathbb{C}^{V\times V}$ as $D_{x, y}=(\deg_Gx)\delta_{x, y}$, where $\delta_{x, y}$ is Kronecker's delta. Define the matrix $T=T(G) \in \mathbb{C}^{V\times V}$ by
\[
T=D^{-\frac{1}{2}}AD^{-\frac{1}{2}}.
\]
In particular, $T=\frac{1}{k}A$ holds if the graph $G$ is $k$-regular.
Note that the diagonal entries of $T$ are always zero,
so the sum of the eigenvalues is zero, i.e.,
\begin{equation} \label{0707-2}
\tr T = 0.
\end{equation}
We denote by $\Spec(P)$ the multiset of eigenvalues of a matrix $P$, specifying eigenvalues on the first line and respective multiplicities on the second line as follows.
\begin{align*}
\Spec(P)= 
\begin{bmatrix}
\lambda_0 & \lambda_1 & \cdots & \lambda_{l-1} & \lambda_l \\
a_0 & a_1 & \cdots & a_{l-1} & a_l
\end{bmatrix}.
\end{align*}

\subsection{Grover walks}
In this section, we provide the definitions of the time evolution matrix of a Grover walk and its periodicity, and introduce the spectral mapping theorem. 
Let $G$ be a graph, and set $\mathcal{A}=\mathcal{A}(G)$.
Then the {\it time evolution matrix} $U=U(G) \in \mathbb{C}^{\mathcal{A} \times \mathcal{A} }$ of the Grover walk over $G$ is defined by
\[U_{a, b}=
\begin{cases}
\frac{2}{\deg_Gt(b)}-1  &  a = b^{-1 },\\
\frac{2}{\deg_Gt(b)}  &  t(b)=o(a) \text{ and } a \neq b^{-1}, \\
0 & t(b) \neq o(a).
\end{cases}
\]
Let $\varphi_0 \in \mathbb{C}^\mathcal{A}$ be an initial state of the Grover walk.
Then, the state $\varphi_t$ at time $t$ is expressed as $\varphi_t= U^t\varphi_0$. 
Let $I_\mathcal{A}\in \mathbb{C}^\mathcal{A}$ be the identity matrix. 
If there exists $\tau \in \mathbb{N}$ such that $U^\tau=I_\mathcal{A}$, then we say that the graph $G$ is \emph{periodic} and the minimum $\tau$ is \emph{period}. 
Such a graph is also called a $\tau$-periodic graph.  

\begin{thm}[{\cite[Lemma~5.3]{SHH}}] \label{thm:SMT}
Let $G$ be a graph, and let $U$ be the time evolution matrix of $G$. 
Then, $G$ is periodic if and only if there exists $\tau \in \mathbb{N}$ such that $\lambda^\tau=1$ for any $\lambda \in \Spec(U)$.
\end{thm}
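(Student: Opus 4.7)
The plan is to reduce the statement to the spectral theorem for unitary matrices. The key input is that $U$ is unitary, hence diagonalizable over $\mathbb{C}$ with eigenvalues on the unit circle; once this is granted, the matrix identity $U^\tau = I_\mathcal{A}$ decouples into a statement about each eigenvalue individually.

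First I would verify that $U$ is unitary. The standard approach is to write $U = SC$, where $S \in \mathbb{C}^{\mathcal{A}\times\mathcal{A}}$ is the arc-reversal defined by $S_{a,b} = \delta_{a,b^{-1}}$, and $C$ is the coin: the block-diagonal matrix whose block at each vertex $v$ acts on the $\deg_G v$ arcs terminating at $v$ as $\tfrac{2}{\deg_G v} J - I$. Then $S$ is a permutation matrix, and using $J^2 = (\deg_G v)\, J$ one checks that each coin block squares to $I$ and is self-adjoint, hence unitary. Expanding $(SC)_{a,b} = C_{a^{-1},b}$ and splitting into the cases $a = b^{-1}$, $o(a)=t(b)$ with $a \neq b^{-1}$, and $o(a) \neq t(b)$ recovers exactly the three branches of the definition of $U$ (noting that the governing vertex is $o(a) = t(b)$, whose degree appears as $\deg_G t(b)$). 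Thus $U$ is unitary.

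Given unitarity, the spectral theorem yields a unitary $P$ and a diagonal matrix $\Lambda = \diag(\lambda_1,\ldots,\lambda_{|\mathcal{A}|})$ with $U = P\Lambda P^{\ast}$, where the $\lambda_i$ exhaust $\Spec(U)$ counted with multiplicity. Then $U^\tau = P\Lambda^\tau P^{\ast}$, so $U^\tau = I_\mathcal{A}$ if and only if $\Lambda^\tau = I$, which is in turn equivalent to $\lambda_i^\tau = 1$ for every $i$. This gives both directions of the biconditional simultaneously; for the backward direction note that if every eigenvalue is a root of unity, one may simply take $\tau$ to be the least common multiple of their orders, which lies in $\mathbb{N}$ because $\Spec(U)$ is finite.

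The only step with any real content is the unitarity of $U$; this is a foundational fact in the Grover walk literature, so the cleanest presentation is to invoke it and then appeal to the spectral theorem as above. I do not anticipate any genuine obstacle here.
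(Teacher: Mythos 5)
Your proof is correct. Note that the paper does not prove this statement at all --- it imports it by citation from \cite[Lemma~5.3]{SHH} --- so there is no internal proof to compare against; your argument (factor $U=SC$ with $S$ the arc-reversal permutation and $C$ the block-diagonal Grover coin $\tfrac{2}{\deg_G v}J - I$ on the arcs terminating at $v$, verify each factor is unitary, then apply the spectral theorem to reduce $U^\tau = I_{\mathcal{A}}$ to $\lambda^\tau = 1$ for every eigenvalue) is precisely the standard one in the Grover-walk literature, and your identification of diagonalizability (via normality) as the one step that makes the eigenvalue condition sufficient, not just necessary, is the right emphasis.
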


Next, we state a theorem known as the {\it spectral mapping theorem} of Grover walks.

\begin{thm}[\cite{SMT}] 
Let  $G = (V, E)$ be a connected graph.
The multiset of eigenvalues of the time evolution matrix $U
=U(G)$ is expressed as follows.

\[
\Spec(U)= \{e^{\pm i \arccos(\lambda_T)}  | \lambda_T \in \Spec(T) \}\cup [1]^{M_1} \cup [-1]^{M_{-1}}, 
\]
where $M_{1} = |E| - |V| + 1$ and $M_{-1} = |E| - |V| + \dim \Ker (T + I)$.
\end{thm}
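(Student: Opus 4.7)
The plan is to turn the $2l$-periodicity hypothesis into spectral constraints on $T$ and then exploit Galois theory over the cyclotomic field $\mathbb{Q}(\zeta_{2l})$ together with the trace identity~\eqref{0707-2} to pin down the spectrum of $T$ to that of $C_{2l}$, except in the exceptional case. First, applying Theorem~\ref{thm:SMT} and the spectral mapping theorem with $\tau=2l$, every eigenvalue of $U$ must be a $2l$-th root of unity, which forces each $\lambda \in \Spec(T)$ into the finite set $\{\cos(\pi m/l) : 0 \le m \le l\}$. Since $G$ is $k$-regular, $T = \tfrac{1}{k}A$, and so each eigenvalue $k\cos(\pi m/l)$ of the integer matrix $A$ is an algebraic integer. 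Moreover, the Galois group $\Gal(\mathbb{Q}(\zeta_{2l})/\mathbb{Q}) \cong (\mathbb{Z}/2l\mathbb{Z})^{\times}$ acts via $\cos(\pi m/l) \mapsto \cos(\pi ms/l)$, and because the characteristic polynomial of $A$ has integer coefficients, Galois-conjugate eigenvalues occur in $\Spec(T)$ with equal multiplicity.

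Writing $n_m$ for the multiplicity of $\cos(\pi m/l)$ in $\Spec(T)$, I would extract two kinds of arithmetic constraints. The trace identity~\eqref{0707-2} gives $\sum_m n_m \cos(\pi m/l) = 0$, which after grouping by Galois orbit becomes an integer relation in the $n_m$. The algebraic-integer requirement on $k\cos(\pi m/l)$ is more delicate: since $2\cos(\pi m/l) = \zeta_{2l}^m + \zeta_{2l}^{-m}$ is always an algebraic integer but $\cos(\pi m/l)$ in general is not, the condition amounts to a $2$-adic divisibility in the ring of integers of $\mathbb{Q}(\zeta_{2l})^+$. By Niven's theorem, the only rational values among the $\cos(\pi m/l)$ for $l$ odd are $\pm 1$ and $\pm 1/2$; the latter occur only when $3 \mid l$ (at $m = l/3,\, 2l/3$), and for them to appear in $\Spec(T)$ one needs $k/2 \in \mathbb{Z}$, i.e., $k$ must be even.

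Combining these with Galois invariance of multiplicities, I would run a case analysis on the odd prime divisors of $l$ and on $k$. The target is to conclude that $(n_m)$ coincides with the multiplicity profile of $C_{2l}$, namely $n_0 = n_l = 1$ and $n_m = 2$ for $1 \le m \le l-1$; this in turn forces $k = 2$ (from the top eigenvalue $k\cos(0) = k$ having multiplicity one, together with $|V| = 2l$) and then $G = C_{2l}$ by the classical spectral characterization of connected regular graphs with that $A$-spectrum. The sole escape is when $k = 3$ and $3 \mid l$, in which case the algebraic-integer condition relaxes just enough to admit additional Galois-compatible profiles, yielding case~(2). The main obstacle I anticipate is making the $2$-adic algebraic-integer condition on $k\cos(\pi m/l)$ precise enough simultaneously across all irrational Galois orbits, so as to rule out $k \ne 2$ outside the pair $(k,l) = (3, 3t)$; this is where the arithmetic of $\mathbb{Q}(\zeta_{2l})^+$ and the ramification of $2$ will do most of the heavy lifting.
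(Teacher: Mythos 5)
Your proposal does not address the statement it is supposed to prove. The statement is the spectral mapping theorem for Grover walks: for an arbitrary connected graph $G$, the spectrum of the time evolution matrix $U$ consists of the lifted eigenvalues $e^{\pm i\arccos(\lambda_T)}$ for $\lambda_T \in \Spec(T)$ together with $+1$ and $-1$ with the stated multiplicities $M_1 = |E|-|V|+1$ and $M_{-1} = |E|-|V|+\dim\Ker(T+I)$. What you have written instead is a proof sketch of the paper's \emph{main theorem} (Theorem~\ref{0616-1}) on $2l$-periodic regular graphs; you invoke the spectral mapping theorem as an ingredient in your very first step, which makes the argument circular as a proof of that theorem. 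Nothing in your outline engages with what actually needs to be shown here: there is no periodicity hypothesis, no regularity hypothesis, and no role for Galois theory or cyclotomic fields in the statement at hand.

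A correct proof of the spectral mapping theorem runs along entirely different lines. One writes $U = S(2dd^* - I)$, where $S$ is the shift (arc-reversal) operator and $d \colon \mathbb{C}^{\mathcal{A}} \to \mathbb{C}^{V}$ is the normalized boundary operator satisfying $dd^* = I_V$ and $dSd^* = T$. One then checks that the subspace spanned by $d^*\mathbb{C}^V + Sd^*\mathbb{C}^V$ is $U$-invariant and that on it $U$ acts, for each eigenvector of $T$ with eigenvalue $\lambda_T$, as a rotation with eigenvalues $e^{\pm i\arccos(\lambda_T)}$ (with degenerations at $\lambda_T = \pm 1$ handled via $\dim\Ker(T\mp I)$); on the orthogonal complement, the so-called birth eigenspace, $U$ acts with eigenvalues $\pm 1$, and the multiplicities $M_{\pm 1}$ come from a dimension count using $|\mathcal{A}| = 2|E|$ and the rank of $d$ and $dS$. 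Note also that the paper itself offers no proof of this statement --- it is quoted from the reference [SMT] --- so the relevant comparison is with that source, and your proposal does not resemble it.
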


\begin{lem} \label{lem:spec} 
    Let $G$ be a connected $k$-regular graph, and set $T:=T(G)$.
    If $G$ is $2l$-periodic, then
    \begin{align*}
        \Spec(T) = 
        \begin{bmatrix}
            1 & \cos(\frac{2\pi}{2l}) & \cos(2 \cdot \frac{2\pi}{2l}) & \cdots & \cos((l-1) \cdot  \frac{2\pi}{2l}) & -1 \\
            1 & a_1 &a_2 & \cdots & a_{l-1} & a_l           
        \end{bmatrix}
    \end{align*}
    for some non-negative integers $a_1,\ldots,a_{l-1}$ and $a_l \in \{0,1\}$.
    Furthermore, $G$ is bipartite if and only if $a_l=1$.
\end{lem}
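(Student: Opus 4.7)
My plan is to combine the spectral mapping theorem with Theorem~\ref{thm:SMT} to constrain $\Spec(T)$, and then to invoke Perron--Frobenius-type facts for connected regular graphs to fix the multiplicities of $\pm 1$.

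First, I would assume $G$ is $2l$-periodic. Then by Theorem~\ref{thm:SMT} every $\mu \in \Spec(U)$ satisfies $\mu^{2l} = 1$. The spectral mapping theorem tells me that each $\lambda_T \in \Spec(T)$ contributes $\mu = e^{\pm i \arccos(\lambda_T)}$ to $\Spec(U)$, so imposing $\mu^{2l} = 1$ forces $2l\arccos(\lambda_T) \in 2\pi\mathbb{Z}$. Since $\arccos(\lambda_T) \in [0, \pi]$, I would conclude $\arccos(\lambda_T) = j\pi/l$ for some $j \in \{0, 1, \ldots, l\}$, whence
\[
\Spec(T) \subseteq \left\{ \cos\!\left(\tfrac{2\pi j}{2l}\right) : j = 0, 1, \ldots, l \right\}.
\]
This matches the candidate eigenvalues in the statement, and since $T$ is real symmetric the multiplicities $a_j$ are automatically non-negative integers (with $a_j = 0$ meaning the eigenvalue does not occur).

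Second, to pin down $a_0$ and $a_l$ I would apply Perron--Frobenius to $T = A/k$. Connectedness and $k$-regularity make the all-ones vector a positive eigenvector with eigenvalue $1$, which is therefore the spectral radius and simple, so $a_0 = 1$. For the extremal value $-1$, I would use the classical spectral characterization of bipartiteness: for a connected regular graph, $-1$ is an eigenvalue of $T$ if and only if $G$ is bipartite, and in that case the spectrum of $T$ is symmetric about $0$, so simplicity of $+1$ transfers to simplicity of $-1$. This yields $a_l \in \{0, 1\}$ together with the stated bipartite characterization. No genuine obstacle arises; the only point to watch is bookkeeping: $a_0$ and $a_l$ must be recovered from $T$ directly via Perron--Frobenius rather than read off from $\Spec(U)$, since the spectral mapping theorem adds ``excess'' $\pm 1$ multiplicities $M_{\pm 1}$ to $\Spec(U)$ that are unrelated to $\Spec(T)$.
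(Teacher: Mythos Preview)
Your argument is correct and matches the paper's proof essentially step for step: both apply Theorem~\ref{thm:SMT} to force $\lambda_T=\cos(2j\pi/2l)$ for some $j\in\{0,\ldots,l\}$, and then invoke the Perron--Frobenius theorem to obtain the simplicity of the eigenvalue $1$, the bound $a_l\in\{0,1\}$, and the bipartite characterization. Your extra remark about the excess $\pm1$ multiplicities in $\Spec(U)$ is sound bookkeeping but not needed for the statement as phrased.
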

\begin{proof}
    Assume that the graph $G$ is $2l$-periodic.
    By Theorem~\ref{thm:SMT}, each eigenvalue $\lambda_T \in \Spec(T)$ satisfies $(e^{\pm i \arccos(\lambda_T)})^{2l} = 1$.
    This implies $$\lambda_T = \cos\left( \frac{2j \pi}{2l} \right)$$
    for some integer $j \in \{0,\ldots,l\}$. 
    Also, by the Perron-Frobenius theorem~\cite[Theorem~2.2.1]{SG}, 
    we see that the largest eigenvalue has multiplicity $1$, and derive $a_l \in \{0,1\}$ and the necessary and sufficient condition for $G$ to be bipartite.
\end{proof}

\begin{defi} \label{ss}
Let $A$ be a finite subset of the complex number field $\mathbb{C}$. Then, we define
\begin{align*}
S^n(A)=\sum_{a \in A}a^n 
\end{align*}
for a positive integer $n$. 
For short, we set $S(A)=S^1(A)$.
\end{defi}



Using the handshaking lemma, we have the following.
\begin{lem} \label{handshaking}
Let $G$ be a $k$-regular graph with n vertices, and let $T=T(G)$. 
Then we have
\begin{align}   \label{handshaking:1}
\tr T^2 = \frac{n}{k} . 
\end{align}
\end{lem}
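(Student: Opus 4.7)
The plan is to compute $\tr T^2$ directly from $T = \frac{1}{k} A$, using the standard fact that the diagonal entries of $A^2$ encode degrees, together with the handshaking lemma.

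First I would write
\[
\tr T^2 = \tr\!\left(\tfrac{1}{k^2} A^2\right) = \tfrac{1}{k^2}\tr A^2,
\]
so the task reduces to computing $\tr A^2$. For each vertex $x \in V$, the diagonal entry $(A^2)_{x,x}$ counts the number of length-two closed walks at $x$, which equals the number of neighbors of $x$, i.e.\ $\deg_G x = k$. Summing over all $n$ vertices gives $\tr A^2 = nk$ (equivalently, one can invoke the handshaking lemma to write $\tr A^2 = \sum_x \deg_G x = 2|E| = nk$).

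Plugging back in yields $\tr T^2 = \frac{nk}{k^2} = \frac{n}{k}$, which is the claim. There is essentially no obstacle here; the only thing to be careful about is that $k$-regularity is used twice, once to rewrite $T = \frac{1}{k}A$ and once to identify $\sum_x \deg_G x$ with $nk$.
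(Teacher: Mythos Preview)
Your proposal is correct and is exactly the argument the paper has in mind: the paper does not spell out a proof but simply says ``Using the handshaking lemma, we have the following,'' which amounts to your computation $\tr T^2 = \frac{1}{k^2}\tr A^2 = \frac{1}{k^2}\sum_{x}\deg_G x = \frac{nk}{k^2} = \frac{n}{k}$.
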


As a basic result,
periodicity of cycle graphs is known as follows.

\begin{lem} \label{0707-1}
The cycle graph $C_n$ with $n$ vertices is $n$-periodic.
\end{lem}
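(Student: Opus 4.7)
The plan is to apply the spectral mapping theorem to the time evolution matrix $U$ of $C_n$ and show that every eigenvalue is an $n$-th root of unity. First I would compute $\Spec(T(C_n))$: since $C_n$ is $2$-regular, $T(C_n)=\tfrac{1}{2}A(C_n)$, and the adjacency matrix of $C_n$ is a standard circulant whose eigenvalues are well known to be $2\cos(2\pi j/n)$ for $j=0,1,\ldots,n-1$. Hence
\[
\Spec(T(C_n))=\{\cos(2\pi j/n)\mid j=0,1,\ldots,n-1\}
\]
as a multiset.

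Next I would plug this into the spectral mapping theorem. For each eigenvalue $\lambda_T=\cos(2\pi j/n)$, the theorem contributes the pair $e^{\pm i\arccos(\lambda_T)}=e^{\pm 2\pi i j/n}$ to $\Spec(U)$, and these are $n$-th roots of unity. The leftover eigenvalues are $1$ with multiplicity $M_1=|E|-|V|+1=1$ and $-1$ with multiplicity $M_{-1}=\dim\Ker(T+I)$, which is $1$ when $n$ is even and $0$ when $n$ is odd; both of these are $n$-th roots of unity whenever they appear. Consequently every $\lambda\in\Spec(U)$ satisfies $\lambda^n=1$, and Theorem~\ref{thm:SMT} implies that $C_n$ is periodic with period dividing $n$.

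To close the argument, I would check that the period is exactly $n$, not a proper divisor. The number $e^{2\pi i/n}$ lies in $\Spec(U)$ (arising from $\lambda_T=\cos(2\pi/n)$), and it has multiplicative order $n$. Any $\tau\in\mathbb{N}$ with $U^\tau=I_{\mathcal A}$ forces $(e^{2\pi i/n})^\tau=1$, hence $n\mid\tau$, so the minimal such $\tau$ equals $n$.

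I do not expect any genuine obstacle: the spectral mapping theorem, combined with the known adjacency spectrum of a cycle, reduces the statement to a one-line observation. The only point requiring a little care is separating the two assertions implicit in ``$n$-periodic''---that $U^n=I_{\mathcal A}$ on the one hand, and that $n$ is the \emph{minimum} such exponent on the other---which is why I included the final paragraph exhibiting a primitive $n$-th root of unity in $\Spec(U)$.
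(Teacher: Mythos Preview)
Your argument is correct and is precisely the eigenvalue computation the paper alludes to in the sentence following the lemma (``it can also be proved by computing the eigenvalues of the cycle graphs''); the paper itself does not spell out a proof but instead cites \cite[Theorem~7.1]{SHH} with parameters $j=0$, $p=0$. Your version has the advantage of being self-contained within the paper's own framework (Theorem~\ref{thm:SMT} and the spectral mapping theorem), and your final paragraph verifying minimality via the primitive root $e^{2\pi i/n}$ is a point the paper leaves implicit.
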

This lemma is essentially derived from~\cite[Theorem~7.1]{SHH} by setting $j=0$ and $p=0$.
Of course, it can also be proved by computing the eigenvalues of the cycle graphs.

\section{$6$-periodic regular graphs} \label{s3}

In this section, we show that the only $6$-periodic regular graph coincides with the cycle graph $C_6$. 
This corresponds to an exceptional case in the main theorem.
As will be discussed in detail in Section~\ref{0706-1},
if $l$ is an odd integer that is a multiple of $3$,
then $2l$-periodic regular graphs could potentially be $3$-regular graphs except for the cycle graph $C_{2l}$.
However, in the case of $l=3$, the $2l$-periodic regular graph is the cycle graph $C_6$ only.

\begin{thm}
    If a connected regular graph $G$ is $6$-periodic, then $G$ is the cycle graph $C_6$.
\end{thm}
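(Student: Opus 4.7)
The plan is to combine the spectral constraint from Lemma~\ref{lem:spec} (applied with $l=3$) with the trace identities $\tr T = 0$ and $\tr T^2 = n/k$, together with an integrality argument on $A = kT$, to reduce the classification to a finite list of candidates. By Lemma~\ref{lem:spec}, one has
\[
\Spec(T) = \begin{bmatrix} 1 & 1/2 & -1/2 & -1 \\ 1 & a_1 & a_2 & a_3 \end{bmatrix}
\]
for some non-negative integers $a_1, a_2$ and $a_3 \in \{0,1\}$. Setting $n = 1 + a_1 + a_2 + a_3$, Equation~\eqref{0707-2} gives $a_1 - a_2 = 2(a_3 - 1)$, and Lemma~\ref{handshaking} gives $1 + (a_1 + a_2)/4 + a_3 = n/k$.

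Eliminating $a_2$, each value of $a_3 \in \{0,1\}$ reduces to a single-parameter Diophantine equation of the shape $k = 4 - c/(a_1 + c')$ with small integer constants $c, c'$; thus $a_1 + c'$ must divide $c$, leaving only a handful of tuples $(a_1,a_2,a_3,k,n)$. Next, note that the eigenvalues of the integer symmetric matrix $A = kT$ are algebraic integers, and rational algebraic integers are integers, so whenever $a_1 \geq 1$ (so that $k/2 \in \Spec(A)$) or $a_2 \geq 1$ (so that $-k/2 \in \Spec(A)$), the degree $k$ must be even; this kills every surviving candidate with $k = 3$.

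The remaining candidates correspond to $K_2$ (with $k=1$, $n=2$), $C_3$ (with $k=2$, $n=3$), and $C_6$ (with $k=2$, $n=6$); connectedness and $k$-regularity pin down the graphs uniquely, since a connected $2$-regular graph on $n$ vertices is $C_n$. By Lemma~\ref{0707-1}, $K_2$ and $C_3$ have periods $2$ and $3$ respectively, hence neither is $6$-periodic, leaving $C_6$ as the unique solution. The main obstacle is verifying that no additional $-1$ eigenvalue of $U$ slips in through the multiplicity $M_{-1}$ of the spectral mapping theorem and lifts the period of the $C_3$ candidate to $6$; a direct computation gives $M_{-1}(C_3) = 0$, closing the argument.
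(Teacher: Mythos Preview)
Your proof is correct and takes a genuinely different route from the paper's.

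The paper splits on $a_3$ and then invokes two external structural classifications: in the bipartite case ($a_3=1$) it quotes a result on bipartite regular graphs with four distinct eigenvalues to force $G=C_6$, and in the non-bipartite case with $a_1\ge 1$ it observes $G$ is strongly regular and quotes the known list of periodic strongly regular graphs. Your argument instead stays entirely inside elementary arithmetic: the two trace identities $\tr T=0$ and $\tr T^2=n/k$ collapse the problem to $k = 4 - 6/(a_1+3)$ (when $a_3=0$) and $k = 4 - 12/(a_1+4)$ (when $a_3=1$), leaving five integer solutions; then the observation that eigenvalues of $A=kT$ are algebraic integers, hence $k/2\in\mathbb{Z}$ whenever $\pm 1/2\in\Spec(T)$, kills the two $k=3$ cases $(a_1,a_2,a_3,n)=(3,5,0,9)$ and $(8,8,1,18)$. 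What your approach buys is self-containment: you never need the classifications from \cite{S} or \cite{YNIE}. What the paper's approach buys is a transparent structural reason (strong regularity, bipartite regularity) rather than a divisibility trick.

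Two very minor remarks. First, invoking Lemma~\ref{0707-1} for the period of $K_2$ is slightly off, since $C_2$ is not a simple graph; but the period-$2$ claim for $K_2=P_2$ is a one-line computation (the $2\times 2$ evolution matrix is the swap). Second, your final paragraph about $M_{-1}(C_3)$ is unnecessary: once Lemma~\ref{0707-1} gives period $3$ for $C_3$, it cannot be $6$-periodic by definition of period, so no separate spectral check is needed.
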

\begin{proof}

Let $G$ be a $6$-periodic $k$-regular graph, and set $T=T(G)$.
From Lemma~\ref{lem:spec}, the spectrum of $T$ is written as
\begin{align*}
\Spec(T)= 
\begin{bmatrix}
1 & \cos\frac{\pi}{3} & \cos(\frac{2\pi}{3}) &  -1 \\
1 & a_1 &a_2 &  a_3
\end{bmatrix} 
=
\begin{bmatrix}
1 & \frac{1}{2} & -\frac{1}{2} &  -1 \\
1 & a_1 &a_2 &  a_3
\end{bmatrix} 
\end{align*}
for some non-negative integers $a_1$, $a_2$, and $a_3$.
From the Perron-Frobenius theorem~\cite[Theorem~2.2.1]{SG}, the graph is bipartite if and only if $a_3=1$.
Assume $a_3=1$.
Then, $G$ is a bipartite regular graph,
so we have $a_1 = a_2$.
If $a_1 = a_2 = 0$,
then the graph $G$ is the path graph $P_2$ with $2$ vertices,
but it has period $2$.
If $a_1 = a_2 \geq 1$,
then the bipartite regular graph $G$ has four distinct eigenvalues.
Thus, the result in \cite[Theorem~4.1]{S} yields that $G$ is the cycle graph $C_6$ with $6$ vertices.

Next, assume $a_3=0$.
We have $a_2 \geq 1$ from~\eqref{0707-2}.
If $a_1 = 0$, then~\eqref{0707-2} leads to $a_2 = 2$.
A graph with such spectrum is the complete graph $K_3$ with $3$ vertices only.
However, the period of $K_3$ is $3$~\cite{YNIE}.
Thus, we may assume that $a_1 \geq 1$.
Then
the adjacency matrix of the regular graph $G$ has three distinct eigenvalues,
so $G$ is a strongly regular graph~\cite[Theorem~9.1.2]{SG}.
Periodicity of strongly regular graphs has been investigated in~\cite{YNIE},
and periodic strongly regular graphs can only have $(2k, k, 0, k)$, $(3\lambda, 2\lambda, \lambda, 2\lambda)$, and $(5, 2, 0, 1)$ as parameters.
These are $K_{k,k}, K_{\lambda, \lambda, \lambda}$, and $C_5$ respectively, and none of them has period $6$.
Therefore, if a connected regular graph $G$ is a $6$-periodic graph, then $G$ is the cycle graph $C_6$.
\end{proof}

\section{Regular graphs inducing period $2l$ where $l$ is an odd integer} \label{0706-1}

As the discussion in Section~\ref{s3} suggests,
it is explicitly determined what eigenvalues the matrix $T$ should have for a given period.
In studying larger even periods,
we will obtain information on the eigenvalues using Galois theory.
We assume readers unfamiliar with Galois theory and attempt a brief explanation of the theory.

A complex number $\alpha$ is \emph{algebraic} over $\mathbb{Q}$ if there exists a non-zero polynomial $p(x)$ with coefficients in $\mathbb{Q}$ such that $p(\alpha) = 0$.
Among such polynomials,
the monic one with the smallest degree is called the {\it $\mathbb{Q}$-minimal polynomial} of $\alpha$.
For example,
we see that $\sqrt{2}$ is a root of the polynomial $f(x) = x^2 - 2$, so it is algebraic over $\mathbb{Q}$.
In addition, the polynomial $f(x)$ is the $\mathbb{Q}$-minimal polynomial of $\sqrt{2}$.
If complex numbers $\alpha$ and $\beta$ share the same $\mathbb{Q}$-minimum polynomial,
they are said to be {\it conjugate} over $\mathbb{Q}$,
or we say that $\beta$ is a {\it conjugate} of $\alpha$ over $\mathbb{Q}$.
For example, $\sqrt{2}$ and $-\sqrt{2}$ are conjugate over $\mathbb{Q}$.
In general, if a complex number $\alpha$ is a root of a $\mathbb{Q}$-coefficient polynomial $p(x)$,
then all conjugates $\alpha_1, \dots, \alpha_s$ over $\mathbb{Q}$ of $\alpha$ are roots of $p(x)$.
This implies that the multiplicities of $\alpha_1, \dots, \alpha_s$ are equal for the roots of the $\mathbb{Q}$-coefficient polynomial $p(x)$.
Therefore, capturing conjugates over $\mathbb{Q}$ of algebraic numbers is useful for partially catching roots of polynomials.

To capture information on conjugates,
it is useful to study symmetries (automorphisms) of fields.
We take $\sqrt{2}$ and its conjugate over $\mathbb{Q}$,
$-\sqrt{2}$, as an example.
The automorphism $\mathbb{Q}(\sqrt{2}) \to \mathbb{Q}(\sqrt{2})$ that does not change elements of $\mathbb{Q}$ is only $\sigma : \sqrt{2} \mapsto -\sqrt{2}$ except for the identity mapping.
In fact, this mapping $\sigma$ provides us with information on the conjugates over $\mathbb{Q}$ of $\sqrt{2}$.
In the following paragraph,
we supplement terminologies on field theory.
Essential importance in this study is Lemma~\ref{kyoyaku}.

Let $L$ and $K$ be fields.
We denote $L \supset K$ by $L/K$ and say that it is a \emph{field extension}.
Let $\zeta_n = e^\frac{2\pi i}{n}$ for each positive integer $n$.
It is known that the field extension $\mathbb{Q}(\zeta_n) / \mathbb{Q}$ is a {\it Galois extension}.
This paper does not define Galois extensions,
but we do not need to understand them in depth either.
The set of all automorphisms of $\mathbb{Q}(\zeta_n)$ that do not change any elements of $\mathbb{Q}$ is a group with respect to the composition of maps.
We call the set the \emph{Galois group} of $\mathbb{Q}(\zeta_n) / \mathbb{Q}$ and denote it by $\Gal(\mathbb{Q}(\zeta_n)/\mathbb{Q})$.



\begin{lem} [{\cite[Corollary~5.1.8]{Gal}}]  \label{kyoyaku}
Let $\alpha \in \mathbb{Q}(\zeta_n)$. Then, the following are equivalent.
\begin{enumerate}[(1)]
    \item $\beta \in \mathbb{Q}(\zeta_n)$ is a conjugate of $\alpha$ over $\mathbb{Q}$. \label{kyoyaku:1}
    \item There exists $f \in \Gal(\mathbb{Q}(\zeta_n)/\mathbb{Q})$ such that $f(\alpha)=\beta$. \label{kyoyaku:2}
\end{enumerate}
\end{lem}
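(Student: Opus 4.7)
The plan is to split the biconditional into its two implications, with $(2)\Rightarrow(1)$ being a short observation and $(1)\Rightarrow(2)$ carrying essentially all of the Galois-theoretic content.

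For $(2)\Rightarrow(1)$, let $p(x)\in\mathbb{Q}[x]$ be the $\mathbb{Q}$-minimal polynomial of $\alpha$ and fix $f\in\Gal(\mathbb{Q}(\zeta_n)/\mathbb{Q})$ with $f(\alpha)=\beta$. Because $f$ is a ring homomorphism which fixes $\mathbb{Q}$ pointwise, applying it to $p(\alpha)=0$ gives $p(\beta)=0$. Since $p(x)$ is monic and irreducible over $\mathbb{Q}$, it must then also be the $\mathbb{Q}$-minimal polynomial of $\beta$, which is precisely the definition of $\beta$ being a conjugate of $\alpha$ over $\mathbb{Q}$.

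For $(1)\Rightarrow(2)$, my plan is to build the desired automorphism in two stages. First, since $\alpha$ and $\beta$ share a common $\mathbb{Q}$-minimal polynomial $p(x)$, the standard identifications $\mathbb{Q}(\alpha)\cong\mathbb{Q}[x]/(p(x))\cong\mathbb{Q}(\beta)$ produce a unique $\mathbb{Q}$-algebra isomorphism $\sigma_0\colon\mathbb{Q}(\alpha)\to\mathbb{Q}(\beta)$ sending $\alpha\mapsto\beta$. The remaining task is to extend $\sigma_0$ to an automorphism $f$ of $\mathbb{Q}(\zeta_n)$; the equality $f(\alpha)=\sigma_0(\alpha)=\beta$ will then be immediate and $f$ will lie in $\Gal(\mathbb{Q}(\zeta_n)/\mathbb{Q})$ by construction.

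The real work lies in this extension step, and it is where the fact that $\mathbb{Q}(\zeta_n)/\mathbb{Q}$ is a Galois (in particular normal) extension is essential. Concretely, I would write $\mathbb{Q}(\zeta_n)=\mathbb{Q}(\alpha)(\zeta_n)$, let $q(x)$ be the minimal polynomial of $\zeta_n$ over $\mathbb{Q}(\alpha)$, and apply $\sigma_0$ coefficient-wise to form $\sigma_0(q)(x)\in\mathbb{Q}(\beta)[x]$. Because $\mathbb{Q}(\zeta_n)$ is the splitting field of $x^n-1$ over $\mathbb{Q}$ and hence normal, $\sigma_0(q)(x)$ splits completely inside $\mathbb{Q}(\zeta_n)$; selecting any root $\gamma$ there and declaring $f(\zeta_n):=\gamma$ extends $\sigma_0$ to a ring homomorphism $\mathbb{Q}(\zeta_n)\to\mathbb{Q}(\zeta_n)$, which is automatically injective and therefore, by a dimension count over $\mathbb{Q}$, an element of $\Gal(\mathbb{Q}(\zeta_n)/\mathbb{Q})$. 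The main obstacle is precisely this splitting step; once normality has been invoked the rest is a routine extension-by-root argument, and one may alternatively bypass it altogether by citing the general theorem that any $\mathbb{Q}$-embedding of a subfield of a Galois extension $L/\mathbb{Q}$ into $L$ extends to an element of $\Gal(L/\mathbb{Q})$.
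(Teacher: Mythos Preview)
Your argument is correct: both directions are handled by the standard Galois-theoretic reasoning, and the extension step in $(1)\Rightarrow(2)$ is justified exactly as you say, via normality of $\mathbb{Q}(\zeta_n)/\mathbb{Q}$.

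There is nothing to compare against, however, because the paper does not prove this lemma at all; it is quoted directly from Cox's \emph{Galois Theory} (Corollary~5.1.8) and used as a black box. Your write-up is precisely the textbook proof one would expect to find behind that citation.
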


Next, we explain the Galois group $\Gal(\mathbb{Q}(\zeta_n)/\mathbb{Q})$. 
Let $n \geq 2$ be a positive integer.
Let $(\mathbb{Z}/n\mathbb{Z})^{\times}$ be the set of units of $\mathbb{Z}/n\mathbb{Z}$.
For each $m \in (\mathbb{Z}/n\mathbb{Z})^{\times}$, we write $\sigma_m$ for the automorphism of $\mathbb{Q}(\zeta_n)$ such that
\begin{align*}
    \sigma_m(\zeta_n) = \zeta_n^m.
\end{align*}
We note that elements of Galois groups are ring homomorphisms that do not change the elements of $\mathbb{Q}$,
so they are defined by specifying only the image of the generators.
It is known that
\[
\Gal(\mathbb{Q}(\zeta_n)/\mathbb{Q})= \{ \sigma_m \mid m \in (\mathbb{Z}/n\mathbb{Z})^{\times} \}.
\]

We determine the $2l$-periodic regular graphs where $l$ is an odd integer.
For an integer $j$, we let $\alpha^{(n)}_j := \cos\big(\frac{2\pi}{n}j \big) = \frac{1}{2}(\zeta^j_n + \zeta^{-j}_n)$.



We define $G_n:=\Gal(\mathbb{Q}(\zeta_{n})/\mathbb{Q} )$. 
For $x \in \mathbb{Q}(\zeta_{n})$,
consider the set $G_nx=\{\sigma(x) \mid \sigma \in \Gal(\mathbb{Q}(\zeta_{n})/\mathbb{Q} )\}$. From Lemma~\ref{kyoyaku}, the elements of $G_n\alpha^{(n)}_j$ are conjugates of $\alpha^{(n)}_j$ over $\mathbb{Q}$. 

The following series of lemmas is necessary computations for proving our main theorem.
The essential reason why our main theorem is derived can be explained as that Lemma~\ref{handshaking} imposes a strong restriction to $2l$-periodic regular graphs,
where $l$ is an odd integer.
Hence, we are interested in the sum of squares of eigenvalues.
Since eigenvalues that are conjugate over $\mathbb{Q}$ to each other have the same multiplicity,
the value of $S^2(G_n \alpha_{j}^{(n)})$ for each $j$ is key.
To compute this quantity,
we first consider properties of orbits containing $\zeta_n^j$.

\begin{lem} \label{w}
Let $n$ be a positive integer,
and let $m$ be a positive divisor of $n$. Then we have
\[
    G_n\zeta_n^m= G_{\frac{n}{m}} \zeta_{\frac{n}{m}}
    \quad \text{ and } \quad
    G_n\alpha^{(n)}_m= G_{\frac{n}{m}} \alpha^{\left(\frac{n}{m}\right)}_1.
\]
In particular,
$
S(G_n\zeta_n^m)= S(G_{\frac{n}{m}} \zeta_{\frac{n}{m}}).
$
\end{lem}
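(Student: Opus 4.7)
The plan is to reduce everything to the elementary identity $\zeta_n^m = \zeta_{n/m}$ (immediate from $\zeta_n = e^{2\pi i/n}$) and its cosine analogue $\alpha_m^{(n)} = \tfrac{1}{2}(\zeta_n^m + \zeta_n^{-m}) = \alpha_1^{(n/m)}$. Once this rewriting is in place, both sides of each asserted equality become orbits of the \emph{same} algebraic number under two different Galois groups, and the task is to show that enlarging from $G_{n/m}$ to $G_n$ produces no new conjugates.

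I would invoke Lemma~\ref{kyoyaku} in both fields simultaneously. The orbit $G_n\zeta_n^m$ is the set of elements of $\mathbb{Q}(\zeta_n)$ that are conjugate to $\zeta_{n/m}$ over $\mathbb{Q}$, while $G_{n/m}\zeta_{n/m}$ is the set of such conjugates already lying in $\mathbb{Q}(\zeta_{n/m})$. But the $\mathbb{Q}$-minimal polynomial of $\zeta_{n/m}$ is the cyclotomic polynomial $\Phi_{n/m}$, whose roots are all primitive $(n/m)$-th roots of unity and therefore lie in $\mathbb{Q}(\zeta_{n/m})$. Hence every conjugate of $\zeta_{n/m}$ inside $\mathbb{Q}(\zeta_n)$ is actually in $\mathbb{Q}(\zeta_{n/m})$, which forces the two orbits to coincide. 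The second equality $G_n\alpha^{(n)}_m = G_{n/m}\alpha_1^{(n/m)}$ follows by the same template: $\alpha_1^{(n/m)}$ lives in the Galois extension $\mathbb{Q}(\zeta_{n/m})/\mathbb{Q}$, so its $\mathbb{Q}$-minimal polynomial already splits over $\mathbb{Q}(\zeta_{n/m})$, and Lemma~\ref{kyoyaku} again equates both orbits with the full set of $\mathbb{Q}$-conjugates.

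The ``In particular'' statement is then a one-line consequence: the two orbits are equal as finite sets, so their sums under $S$ agree. The only mildly delicate ingredient is the normality of the cyclotomic extension used above, which at the level of Galois groups is the surjectivity of the natural restriction $G_n \twoheadrightarrow G_{n/m}$; alternatively, one can give a hands-on check by using the parameterisation $G_n = \{\sigma_k : k \in (\mathbb{Z}/n\mathbb{Z})^{\times}\}$ recalled before the lemma, computing $\sigma_k(\zeta_n^m) = \zeta_{n/m}^k$, and noting that the reduction map $(\mathbb{Z}/n\mathbb{Z})^{\times} \to (\mathbb{Z}/(n/m)\mathbb{Z})^{\times}$ is onto. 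I do not anticipate any serious obstacle.
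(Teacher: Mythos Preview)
Your argument is correct and essentially identical to the paper's: both proofs identify each orbit, via Lemma~\ref{kyoyaku}, with the full set of $\mathbb{Q}$-conjugates of the single element $\zeta_n^m=\zeta_{n/m}$ (respectively $\alpha_m^{(n)}=\alpha_1^{(n/m)}$), and conclude that the two orbits coincide. The paper writes the set of conjugates as $\{z\in\mathbb{C}:\ldots\}$ and passes directly from $G_n$ to $G_{n/m}$, whereas you make explicit the reason this is legitimate---namely that $\Phi_{n/m}$ already splits over $\mathbb{Q}(\zeta_{n/m})$---but this is the same mechanism, not a different route.
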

\begin{proof}
By Lemma~\ref{kyoyaku}, we have
\begin{align*}
    G_n\zeta_n^m
    & = \{ z \in \mathbb{C} \mid \text{$z$ and $\zeta_n^m$ are conjugate over $\mathbb{Q}$}  \} \\
    &= \{ z \in \mathbb{C} \mid \text{$z$ and $\zeta_{\frac{n}{m}}$ are conjugate over $\mathbb{Q}$}  \} \\
    &= G_{\frac{n}{m}}\zeta_{\frac{n}{m}}.
\end{align*}
Also, $G_n\alpha^{(n)}_m= G_{\frac{n}{m}} \alpha^{\left(\frac{n}{m}\right)}_1$ holds in the same manor.
\end{proof}

The function $\phi : \mathbb{Z}_{> 0} \to \mathbb{Z}_{> 0}$ defined as $\phi(n)=|(\mathbb{Z}/n\mathbb{Z})^{\times}|$ is the {\it Euler function}.
Note that 
\begin{equation} \label{0716-1}
\phi(2n) = \phi(n)
\end{equation}
if $n$ is odd.

\begin{lem} \label{thnumber}
Let $n \geq 3$ be an integer.
Then, the mapping $\re: G_n \zeta_n \to G_n \alpha_1^{(n)}$,
which is defined by $\re(\zeta_n^j) = \frac{\zeta_n^j + \zeta_n^{-j}}{2} = \alpha_j^{(n)}$ for $j \in (\mathbb{Z}/n\mathbb{Z})^{\times}$,
is a $2$-to-$1$ mapping,
i.e., for any $\alpha_j^{(n)} \in G_n \alpha_1^{(n)}$,
we have $|\re^{-1}( \{ \alpha_j^{(n)} \})| = 2$.
Also, let $m \geq 3$ be a divisor of $n$. Then, we have

\[
|G_n\alpha_1^{(m)}|=\frac{\phi(m)}{2} .
\]
\end{lem}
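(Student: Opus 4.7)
The plan is to establish the two assertions separately, deducing the cardinality formula from the $2$-to-$1$ claim together with Lemma~\ref{w}.

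For the $2$-to-$1$ claim, I would first verify that $\re$ is well-defined and surjective: since $\alpha_1^{(n)} = \tfrac{1}{2}(\zeta_n + \zeta_n^{-1})$, the automorphism $\sigma_j$ sends $\alpha_1^{(n)}$ to $\alpha_j^{(n)} = \re(\zeta_n^j)$, so the map really lands in $G_n\alpha_1^{(n)}$ and hits every element. To compute the fibers, fix $j \in (\mathbb{Z}/n\mathbb{Z})^{\times}$ and suppose $\re(\zeta_n^k) = \re(\zeta_n^j)$ for another unit $k$. Writing $u = \zeta_n^j$ and $v = \zeta_n^k$, the equality $u + u^{-1} = v + v^{-1}$ factors as $(u - v)(uv - 1) = 0$, forcing $v = u$ or $v = u^{-1}$, i.e.\ $k \equiv \pm j \pmod n$. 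Thus the fiber is contained in $\{\zeta_n^j, \zeta_n^{-j}\}$, and these two elements are distinct: $\zeta_n^j = \zeta_n^{-j}$ would give $n \mid 2j$, which combined with $\gcd(j,n) = 1$ forces $n \mid 2$, contradicting $n \geq 3$.

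For the cardinality formula, the key identification is
\[
\alpha_1^{(m)} = \cos\!\Bigl(\tfrac{2\pi}{n}\cdot\tfrac{n}{m}\Bigr) = \alpha_{n/m}^{(n)},
\]
which lets me apply Lemma~\ref{w} with the positive divisor $n/m$ of $n$ to obtain
\[
G_n\alpha_1^{(m)} \;=\; G_n\alpha_{n/m}^{(n)} \;=\; G_m\alpha_1^{(m)}.
\]
The $2$-to-$1$ claim, now invoked with $n$ replaced by $m \geq 3$, gives a surjection from $G_m\zeta_m$ onto $G_m\alpha_1^{(m)}$ with every fiber of size $2$. Since $G_m\zeta_m = \{\zeta_m^s \mid s \in (\mathbb{Z}/m\mathbb{Z})^{\times}\}$ has exactly $\phi(m)$ elements, I conclude $|G_n\alpha_1^{(m)}| = \phi(m)/2$.

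No step is genuinely difficult; the only delicate point is excluding the collision $\zeta_n^j = \zeta_n^{-j}$, which is precisely the role of the hypothesis $n \geq 3$. Everything else reduces to the quadratic $x + x^{-1} = c$ having two reciprocal roots, together with the identification $\alpha_1^{(m)} = \alpha_{n/m}^{(n)}$ needed to apply Lemma~\ref{w}.
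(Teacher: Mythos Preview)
Your proof is correct and follows essentially the same route as the paper's: establish surjectivity, show each fiber is $\{\zeta_n^j,\zeta_n^{-j}\}$ with the two elements distinct because $n\geq 3$, and then reduce $G_n\alpha_1^{(m)}$ to $G_m\alpha_1^{(m)}$ via Lemma~\ref{w} before invoking the $2$-to-$1$ claim at~$m$. The only cosmetic difference is that the paper resolves $\alpha_j^{(n)}=\alpha_k^{(n)}$ by the trigonometric product formula $\cos x-\cos y=-2\sin\frac{x+y}{2}\sin\frac{x-y}{2}$, whereas you use the algebraic factorization $(u-v)(uv-1)=0$; both yield $k\equiv\pm j\pmod n$ immediately.
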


\begin{proof}

From the definition of $\alpha_j^{(n)}$, the mapping is a surjection. 
Next, we fix $\alpha_k^{(n)} \in G_n \alpha_1^{(n)}$, where $k \in (\mathbb{Z}/n\mathbb{Z})^{\times}$.
We assume that $\re(\zeta_n^j) = \alpha_k^{(n)}$, where $j \in (\mathbb{Z}/n\mathbb{Z})^{\times}$.
Then, 
\begin{align*}
0 &= \re(\zeta_n^j) - \alpha_k^{(n)}
= \alpha_j^{(n)}-\alpha_k^{(n)} =\cos\big(\frac{2\pi}{n}j\big) - \cos\big(\frac{2\pi}{n}k\big) \\
&=-2\sin \big(\frac{j+k}{n}\pi \big)\sin \big(\frac{j-k}{n}\pi \big).
\end{align*}
Therefore, we have $j = \pm k$.
Moreover, we have $k \neq -k$.
Indeed, if $k = -k$, then $2k = 0$.
Since $k \in (\mathbb{Z}/n\mathbb{Z})^{\times}$,
we have $2=0$.
This contradicts $n \geq 3$.
Thus $\zeta_n^k \neq \zeta_n^{-k}$. 
We see that the mapping $\re$ is a $2$-to-$1$ mapping. 

Also, let $l$ be a positive integer satisfying $n=ml$.
By Lemma~\ref{w}, we have
\[
G_n\zeta_m = G_n \zeta_{n}^l = G_{\frac{n}{l}} \zeta_{\frac{n}{l}} = G_m\zeta_m. 
\]
Hence,
\[
G_n\alpha_1^{(m)}= \re(G_n\zeta_m)=\re(G_m\zeta_m).
\]
Since the mapping $\re : G_m\zeta_m \to G_m\alpha_1^{(m)}$ is a $2$-to-$1$ mapping as $m \geq 3$, 
we have 
\[
|G_n\alpha_1^{(m)}|
= |\re(G_m \zeta_m) |
=\frac{|G_m \zeta_m|}{2}
=\frac{\phi(m)}{2}.
\]
\end{proof}

\begin{defi}
For a positive integer $n$, let
\[
D(n) = \{m \in \mathbb{Z} \mid \text{$m$ is a positive divisor of $n$} \} .
\]
\end{defi}

\begin{lem} \label{disjoint}
    Let $n$ be a positive integer.
    For distinct integers $i \neq j$ in $D(n)$, $G_n \zeta_n^i \cap G_n \zeta_n^j = \emptyset$.
\end{lem}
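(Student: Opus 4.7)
The plan is to reduce to Lemma~\ref{w} so that each orbit becomes a set of primitive roots of unity of a known order, and then to distinguish the two by that order. Applying Lemma~\ref{w} with the divisor $i$ of $n$ gives $G_n\zeta_n^i = G_{n/i}\zeta_{n/i}$, and likewise $G_n\zeta_n^j = G_{n/j}\zeta_{n/j}$, so it suffices to show that $G_{n/i}\zeta_{n/i}$ and $G_{n/j}\zeta_{n/j}$ are disjoint as subsets of $\mathbb{C}$.

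Next, using the explicit description $\Gal(\mathbb{Q}(\zeta_m)/\mathbb{Q}) = \{\sigma_r \mid r \in (\mathbb{Z}/m\mathbb{Z})^{\times}\}$ with $\sigma_r(\zeta_m) = \zeta_m^r$ recalled just before Lemma~\ref{kyoyaku}, I would identify
\[
G_m \zeta_m = \{\zeta_m^r \mid r \in (\mathbb{Z}/m\mathbb{Z})^{\times}\},
\]
which is exactly the set of complex numbers whose multiplicative order in $\mathbb{C}^{\times}$ equals $m$.

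To finish, I would use the multiplicative order as an invariant separating the two orbits: every element of $G_{n/i}\zeta_{n/i}$ has order $n/i$, every element of $G_{n/j}\zeta_{n/j}$ has order $n/j$, and since $i,j \in D(n)$ with $i \neq j$ we have $n/i \neq n/j$. Hence the two orbits are disjoint. There is no real obstacle here; the argument is essentially a bookkeeping application of Lemma~\ref{w} combined with the standard fact that sets of primitive roots of unity of distinct orders do not intersect.
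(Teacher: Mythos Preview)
Your proof is correct, but it takes a different route from the paper's. The paper argues directly by contradiction at the level of exponents: if $\zeta_n^{ix} = \zeta_n^{jy}$ for some $x,y \in (\mathbb{Z}/n\mathbb{Z})^\times$, then $ix \equiv jy \pmod n$, hence $ix \equiv 0 \pmod j$; assuming without loss of generality $j \nmid i$, this forces $\gcd(j,x) \geq 2$, contradicting $\gcd(n,x)=1$. Your approach instead reduces via Lemma~\ref{w} to the orbits $G_{n/i}\zeta_{n/i}$ and $G_{n/j}\zeta_{n/j}$, identifies each as the set of primitive $(n/i)$-th (respectively $(n/j)$-th) roots of unity, and separates them by the multiplicative-order invariant. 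Your argument is conceptually cleaner and makes efficient reuse of Lemma~\ref{w}; the paper's argument is slightly more self-contained in that it does not appeal to Lemma~\ref{w}, working purely with congruences modulo $n$. Both are short and valid.
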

\begin{proof}
    Fix such integers $i$ and $j$.
    By way of contradiction, we assume $G_n \zeta_n^i \cap G_n \zeta_n^j \neq \emptyset$.
    Then there exist $x$ and $y$ in $\mathbb{Z}$ coprime to $n$ such that $ix-jy \equiv 0 \pmod n$.
    Hence $ix \equiv 0 \pmod j$.
    Without loss of generality, we may assume that $j \nmid i$.
    Then we see that $\gcd(n,x) \geq \gcd(j,x) \geq 2$.
    This contradicts the assumption that $x$ and $n$ are coprime.
\end{proof}

Let $n$ be a positive integer. The function $\mu (n)$ that satisfies the following is called the {\it M\"{o}bius function}.
\[
\mu(n)= \begin{cases}
(-1)^r & \text{if $n$ is a product of $r$ different prime numbers, } \\
0 & \text{if $n$ has at least one square factor. }
\end{cases}
\]

\begin{lem}[{\cite[(16.6.4)]{ITN}}] \label{wa}
Let $n$ be a positive integer. Then, we have 
\[
S(G_n\zeta_{n}) = \mu(n). 
\]
\end{lem}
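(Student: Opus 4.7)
The plan is to recognize the orbit $G_n \zeta_n$ as exactly the set of primitive $n$-th roots of unity, and then extract the desired sum by Möbius inversion applied to the sum of all $n$-th roots of unity. First I would observe that, from the description $\Gal(\mathbb{Q}(\zeta_n)/\mathbb{Q}) = \{\sigma_m \mid m \in (\mathbb{Z}/n\mathbb{Z})^{\times}\}$ given just before the lemma, we have
\[
G_n \zeta_n = \{\zeta_n^m \mid m \in (\mathbb{Z}/n\mathbb{Z})^{\times} \},
\]
which is precisely the set of primitive $n$-th roots of unity (these are all distinct since $\zeta_n^m = \zeta_n^{m'}$ forces $m \equiv m' \pmod n$).

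Next I would stratify the $n$-th roots of unity by their exact order. Every $n$-th root of unity is a primitive $d$-th root of unity for a unique $d \in D(n)$, and by Lemma~\ref{w} the orbit $G_d \zeta_d$ collects exactly these. This yields
\[
\sum_{d \in D(n)} S(G_d \zeta_d) = \sum_{k=0}^{n-1} \zeta_n^k.
\]
The right-hand side is the sum of the roots of $x^n - 1$; it equals $1$ when $n = 1$ and $0$ when $n \geq 2$. In other words, the arithmetic function $F(n) := \sum_{d \in D(n)} S(G_d \zeta_d)$ is the indicator of $\{1\}$.

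Finally, since the Möbius function is characterized by the identity $\sum_{d \in D(n)} \mu(d) = [n=1]$, Möbius inversion (applied once, against the summatory identity above) gives $S(G_n \zeta_n) = \mu(n)$, as desired. The main technical point is confirming that the orbit really has size $\phi(n)$, so that no cancellation is hidden by repeated summands; this is already taken care of by the standard fact that the Galois action on $\zeta_n$ is faithful, which is implicit in the earlier description of $G_n$. Everything after that identification is a one-line application of Möbius inversion, so I expect no real obstacle beyond this bookkeeping.
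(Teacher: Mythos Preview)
Your argument is correct and is the standard proof that the sum of the primitive $n$-th roots of unity equals $\mu(n)$. Note, however, that the paper does not supply its own proof of this lemma at all: it is simply quoted from Hardy and Wright~\cite[(16.6.4)]{ITN} as a known fact, so there is no proof in the paper to compare against.
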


\begin{lem} \label{zetasquare}
Let $n$ be a positive integer.
Then, we have

\[
S( G_n\zeta_n^2) = \begin{cases}
S( G_n\zeta_n) & \text{if $n$ is odd,}\\
S( G_\frac{n}{2}\zeta_\frac{n}{2}) & \text{if  $n$ is even.}
\end{cases}
\]
\end{lem}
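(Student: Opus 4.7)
The plan is to split on the parity of $n$ and show that in each case the orbit $G_n\zeta_n^2$ coincides with an orbit whose sum is already known, so the claim reduces to comparing identical sets.

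First I would handle the odd case. If $n$ is odd, then $2$ is coprime to $n$, so $2 \in (\mathbb{Z}/n\mathbb{Z})^{\times}$ and therefore $\sigma_2 \in G_n$. Since $\sigma_2(\zeta_n) = \zeta_n^2$, by Lemma~\ref{kyoyaku} the elements $\zeta_n$ and $\zeta_n^2$ are conjugate over $\mathbb{Q}$, which means they share the same orbit under $G_n$. Hence $G_n\zeta_n^2 = G_n\zeta_n$, and summing over the equal sets yields $S(G_n\zeta_n^2) = S(G_n\zeta_n)$.

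For the even case, I would invoke Lemma~\ref{w} directly. When $n$ is even, $2$ is a positive divisor of $n$, so Lemma~\ref{w} applied with the divisor $m=2$ gives
\[
G_n\zeta_n^2 = G_{\frac{n}{2}}\zeta_{\frac{n}{2}},
\]
and summing the two sides produces $S(G_n\zeta_n^2) = S(G_{\frac{n}{2}}\zeta_{\frac{n}{2}})$, as required.

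There is no real obstacle here beyond recognizing that each case is an immediate consequence of already-established material: the odd case follows from the action of $\sigma_2$ together with Lemma~\ref{kyoyaku}, and the even case is Lemma~\ref{w} with the divisor $2$. The only point to be careful about is that in the odd case we need $2$ to be a unit modulo $n$ (which is exactly the hypothesis of oddness), while in the even case we need $2$ to be a divisor of $n$ (which is likewise exactly the hypothesis of evenness); these conditions are complementary, and together they exhaust all positive integers $n$.
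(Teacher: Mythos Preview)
Your proof is correct and follows essentially the same approach as the paper: the odd case uses $\sigma_2 \in G_n$ together with Lemma~\ref{kyoyaku} to identify the orbits, and the even case applies Lemma~\ref{w} with the divisor $m=2$.
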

\begin{proof}

If $n$ is odd, then $n$ and $2$ are coprime. 
We have $2 \in (\mathbb{Z}/n\mathbb{Z})^{\times}$ and $\sigma_2 \in G_n$.
Therefore $\zeta_n$ and $\zeta_n^2$ are conjugate over $\mathbb{Q}$ from Lemma~\ref{kyoyaku}. Thus
\[
G_n\zeta_n^2 =G_n\zeta_n .
\]
When $n$ is even,
we have
\[
G_n\zeta_n^2 = G_{\frac{n}{2}}\zeta_{\frac{n}{2}}
\]
by Lemma~\ref{w}.
\end{proof}

\begin{lem}\label{aa}
Let $m$ be an integer, and let $n$ be a positive integer, we have 
\begin{align}
S^2(G_n\zeta_n^m) = S^2(G_n\zeta_n^{-m}). \label{aaaa}
\end{align} 
\end{lem}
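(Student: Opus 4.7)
The plan is to prove the stronger statement that $G_n\zeta_n^m = G_n\zeta_n^{-m}$ as sets, from which the equality of $S^2$ is immediate (both sums are taken over the same finite set).

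First, I would observe that $\gcd(-1,n) = 1$ for every positive integer $n$, so $-1 \in (\mathbb{Z}/n\mathbb{Z})^{\times}$. Consequently the automorphism $\sigma_{-1}$ belongs to $G_n = \Gal(\mathbb{Q}(\zeta_n)/\mathbb{Q})$; when $n \in \{1,2\}$ this is simply the identity map, but the argument goes through uniformly. Since $\sigma_{-1}$ is a ring homomorphism fixing $\mathbb{Q}$ and $\sigma_{-1}(\zeta_n) = \zeta_n^{-1}$, we have
\[
\sigma_{-1}(\zeta_n^m) = \sigma_{-1}(\zeta_n)^m = \zeta_n^{-m}.
\]
Hence $\zeta_n^{-m}$ lies in the $G_n$-orbit of $\zeta_n^m$.

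Next, I would invoke the standard fact that two $G_n$-orbits on $\mathbb{Q}(\zeta_n)$ are either equal or disjoint (the argument used already in Lemma~\ref{disjoint}). Since $\zeta_n^{-m} \in G_n\zeta_n^m \cap G_n\zeta_n^{-m}$, the two orbits coincide, and therefore
\[
S^2(G_n\zeta_n^m) \;=\; \sum_{x \in G_n\zeta_n^m} x^2 \;=\; \sum_{x \in G_n\zeta_n^{-m}} x^2 \;=\; S^2(G_n\zeta_n^{-m}).
\]
There is no real obstacle here: the substance of the lemma is the single observation that complex conjugation sits inside the Galois group of every cyclotomic extension $\mathbb{Q}(\zeta_n)/\mathbb{Q}$, so that an element and its inverse always lie in the same Galois orbit.
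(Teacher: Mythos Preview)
Your proof is correct and essentially identical to the paper's: both show the stronger fact $G_n\zeta_n^m = G_n\zeta_n^{-m}$ via the observation that $\sigma_{-1}=\sigma_{n-1}\in G_n$ (the paper phrases this as ``$n$ and $n-1$ are coprime''), and then the equality of $S^2$ is immediate. One minor remark: the parenthetical pointer to Lemma~\ref{disjoint} is unnecessary and slightly misleading, since that lemma treats only orbits of $\zeta_n^i$ for divisors $i\mid n$; the fact you actually need---that two orbits of a group action are equal or disjoint---is the elementary general statement and requires no reference.
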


\begin{proof}

Since $n$ and $n-1$ are coprime, $\zeta_n$ and $\zeta_n^{n-1}=\zeta_n^{-1}$ are conjugate over $\mathbb{Q}$. 
Hence $G_n \zeta_n^m = G_n (\zeta_n^{-1})^m = G_n \zeta_n^{-m}$ by Lemma~\ref{kyoyaku}.
This implies $S^2(G_n\zeta_n^m) = S^2(G_n\zeta_n^{-m})$.
\end{proof}

\begin{cor} \label{sps}
Let $l \geq 3$ be an odd integer, and set $n := 2l$. Then we have
\[
S^2(G_n\alpha_1^{(n)})= \frac{1}{4}\{ S(G_n\zeta_n^2) + \phi(n) \} . 
\]
\end{cor}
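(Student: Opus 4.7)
The plan is to exploit Lemma~\ref{thnumber}: since $n = 2l \geq 6$, the map $\re : G_n\zeta_n \to G_n\alpha_1^{(n)}$, $\zeta_n^j \mapsto \alpha_j^{(n)}$, is $2$-to-$1$. This lets us replace the sum defining $S^2(G_n\alpha_1^{(n)})$ by half a sum over $G_n\zeta_n$, which is then expanded using $\alpha_j^{(n)} = \tfrac{1}{2}(\zeta_n^j + \zeta_n^{-j})$ into a linear combination of a power sum of $\zeta_n^{\pm 2j}$ and a constant; the last task is to recognise that power sum as $S(G_n\zeta_n^2)$.

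Carrying out the first step, since the mapping is $2$-to-$1$,
\[
2\, S^2\bigl(G_n\alpha_1^{(n)}\bigr)
= \sum_{z \in G_n\zeta_n} \re(z)^2
= \frac{1}{4}\sum_{j \in (\mathbb{Z}/n\mathbb{Z})^\times}\bigl(\zeta_n^{j} + \zeta_n^{-j}\bigr)^2.
\]
Expanding the square and using that $j \mapsto -j$ is an involution on $(\mathbb{Z}/n\mathbb{Z})^\times$, the right-hand side simplifies to
\[
\frac{1}{2}\,\phi(n) + \frac{1}{2}\sum_{j \in (\mathbb{Z}/n\mathbb{Z})^\times} \zeta_n^{2j}.
\]

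The remaining, and main, step is the identification
\[
\sum_{j \in (\mathbb{Z}/n\mathbb{Z})^\times} \zeta_n^{2j} \;=\; S\bigl(G_n\zeta_n^2\bigr).
\]
Rewriting the left-hand side as $\sum_{\sigma \in G_n}\sigma(\zeta_n^2)$ and invoking the orbit--stabilizer theorem, it equals $\bigl(|G_n|/|G_n\zeta_n^2|\bigr)\cdot S(G_n\zeta_n^2)$. By Lemma~\ref{w} we have $G_n\zeta_n^2 = G_l\zeta_l$, so $|G_n\zeta_n^2| = \phi(l)$; meanwhile $|G_n| = \phi(2l) = \phi(l)$ by~\eqref{0716-1}, because $l$ is odd. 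Hence the ratio is $1$, and substituting back yields $S^2(G_n\alpha_1^{(n)}) = \tfrac{1}{4}\bigl(S(G_n\zeta_n^2) + \phi(n)\bigr)$.

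The main obstacle is precisely this last identification: for a general modulus $n$ the map $j \mapsto 2j$ on $(\mathbb{Z}/n\mathbb{Z})^\times$ is not a bijection, so one would have to count how many times each element of $G_n\zeta_n^2$ appears in the power sum. The hypothesis that $l$ is odd is exactly what forces $|G_n| = |G_n\zeta_n^2|$ and removes all such multiplicities; this is the only place where the parity of $l$ enters the argument.
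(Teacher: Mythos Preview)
Your proof is correct and follows essentially the same route as the paper's: use the $2$-to-$1$ map of Lemma~\ref{thnumber} to rewrite the sum over $G_n\alpha_1^{(n)}$ as half a sum over $G_n\zeta_n$, expand, and identify the resulting power sum with $S(G_n\zeta_n^2)$. The one noteworthy difference is that you explicitly justify the identification $\sum_{j \in (\mathbb{Z}/n\mathbb{Z})^\times}\zeta_n^{2j} = S(G_n\zeta_n^2)$ via orbit--stabilizer and the equality $\phi(2l)=\phi(l)$ for odd $l$, whereas the paper passes over this step silently and instead invokes Lemma~\ref{aa} for the $\pm$ symmetry; your treatment is in fact the more careful one here.
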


\begin{proof}

From Lemma \ref{thnumber}, we have 
\begin{align*}
S^2(G_n\alpha_1^{(n)}) 
= \sum_{\alpha \in G_n \alpha_1^{(n)}} \alpha^2 
= \frac{1}{2} \sum_{z \in G_n \zeta_n} \left(\frac{z+z^{-1}}{2}\right)^2.
\end{align*}
We see that
\begin{align*}
\frac{1}{2} \sum_{z \in G_n \zeta_n} \left(\frac{z+z^{-1}}{2}\right)^2
&=
\frac{1}{2} \sum_{j \in (\mathbb{Z}/n\mathbb{Z})^\times} \left(\frac{\zeta_n^j+\zeta_n^{-j}}{2}\right)^2 \\
&=\frac{1}{8}\sum_{j \in (\mathbb{Z}/n\mathbb{Z})^\times}\{(\zeta_n^{j})^2 + (\zeta_n^{-j})^2+2\}\\
&=\frac{1}{8}\sum_{j \in (\mathbb{Z}/n\mathbb{Z})^\times}\{(\zeta_n^{2})^j + (\zeta_n^{-2})^j+2\}\\
&=\frac{1}{8}\left\{S(G_n\zeta_n^2) + S(G_n\zeta_n^{-2}) + 2\phi(n)\right\}  \\
&=\frac{1}{8}\{2S(G_n\zeta_n^2)  + 2\phi(n)\}  \\
&=\frac{1}{4}\{S(G_n\zeta_n^2)  + \phi(n)\}, \notag
\end{align*}
where the fifth equality is derived from Lemma~\ref{aa}. 
\end{proof}

\begin{thm}[Theorem~\ref{0616-1}, restated] \label{0629-1}
Let $G$ be a $k$-regular graph, and let $l$ be an odd integer. 
If $G$ is a $2l$-periodic graph, then one of the following holds.
\begin{enumerate}[(1)]
    \item $G$ is the cycle graph $C_{2l}$ with $2l$ vertices.
    \item $k=3$ and $l$ is a multiple of  $3$.
\end{enumerate}

\end{thm}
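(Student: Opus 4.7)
The plan is to derive a single master equation relating the multiplicities of the eigenvalues of $T$, and then pin down $(k,l)$ by a sign analysis of its coefficients. First I would use Lemma~\ref{lem:spec} to locate the spectrum on $\{\alpha_j^{(2l)}:0\leq j\leq l\}$. Since $G$ is regular, $T=\frac{1}{k}A(G)$ has a rational characteristic polynomial, so Galois conjugates have equal multiplicities. Hence each orbit $G_{2l}\alpha_1^{(m)}$, indexed by a divisor $m\geq 3$ of $2l$, carries a single common multiplicity $b_m\in\mathbb{Z}_{\geq 0}$, while $1$ and $-1$ occur with multiplicities $1$ and $a_l\in\{0,1\}$ respectively.

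Next, by running the calculation of Corollary~\ref{sps} with $m$ in place of $n$---valid because $G_{2l}\alpha_1^{(m)}=G_m\alpha_1^{(m)}$ by Lemma~\ref{w}---and applying Lemmas~\ref{zetasquare} and~\ref{wa}, we obtain
\[
S^2(G_{2l}\alpha_1^{(m)})=\frac{\phi(m)+\mu_m}{4},\quad \mu_m:=\begin{cases}\mu(m)&m\text{ odd},\\ \mu(m/2)&m\text{ even}.\end{cases}
\]
Substituting this together with $n_G=1+a_l+\sum_m b_m\,\phi(m)/2$ into Lemma~\ref{handshaking}'s identity $\tr T^2=n_G/k$ and clearing fractions yields the master equation
\[
\sum_{\substack{m\,\mid\,2l\\ m\geq 3}}b_m\bigl[(2-k)\phi(m)-k\mu_m\bigr]=4(k-1)(1+a_l).
\]

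The decisive step is a sign analysis of $C_m:=(2-k)\phi(m)-k\mu_m$. Since $\phi(m)\geq 2$ for $m\geq 3$ and $|\mu_m|\leq 1$, we have $C_m\leq 4-k$, with equality iff $\phi(m)=2$ and $\mu_m=-1$. For $k\geq 4$ every $C_m\leq 0$ while the right-hand side is at least $12$, a contradiction. For $k=3$ we get $C_m\leq 1$, and the equality case $\phi(m)=2$ restricts $m$ to $\{3,4,6\}$; since $l$ is odd, only $m\in\{3,6\}$ can divide $2l$, which happens precisely when $3\mid l$. Hence if $3\nmid l$ every $C_m\leq -1$ and the master equation cannot hold, so $k=3$ forces $3\mid l$, i.e.\ case~(2). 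The remaining case $k\leq 2$ is direct: a connected $k$-regular graph with $k\leq 2$ is $K_1$, $K_2$, or a cycle, and by Lemma~\ref{0707-1} the cycle of period $2l$ is $C_{2l}$, giving case~(1).

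The main obstacle I anticipate is the bookkeeping around the spectrum-to-orbit correspondence: one must verify that Corollary~\ref{sps} extends cleanly to every divisor $m\geq 3$ of $2l$, and that the pairing of an odd $m\geq 3$ with $2m$ (which satisfy $\phi(2m)=\phi(m)$ and $\mu(2m)=-\mu(m)$, hence share the same $S^2$-value but contribute opposite first moments) is handled without double-counting. Once these identities are in place, the inequality $C_m\leq 4-k$ is the single hammer that closes the argument.
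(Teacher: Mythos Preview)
Your proposal is correct and follows essentially the same strategy as the paper: both group the eigenvalues of $T$ into Galois orbits, compute $n$ and $\tr T^2$ orbit by orbit via Lemma~\ref{thnumber}, Corollary~\ref{sps}, Lemmas~\ref{zetasquare} and~\ref{wa}, feed these into Lemma~\ref{handshaking}, and finish with a sign analysis of the resulting linear relation in the multiplicities. The only differences are cosmetic---the paper pairs the orbits for $d'$ and $2d'$ into a single variable $X_{d'}=m_{d'}+m_{2d'}$ and phrases the final bound as $k<2+\tfrac{2}{p_1-2}$, whereas you keep the orbits separate via your $\mu_m$ and read off the conclusion from $C_m\le 4-k$; both routes yield the same dichotomy.
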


\begin{proof}

Let $l=p_1^{r_1}p_2^{r_2}\cdots p_s^{r_s}$ be the prime factorization of $l$, where $p_i < p_{i+1}$.
By Lemma~\ref{lem:spec}, the eigenvalues of $T$ are elements of $\{ \alpha^{(2l)}_j \mid j \in \{0,\ldots,l \}\}$.
Noting that the characteristic polynomial of $T$ is a $\mathbb{Q}$-coefficient polynomial, we see that a conjugate of an eigenvalue of $T$ is an eigenvalue.
Hence, the set of eigenvalues of $T$ coincides with the union of a subset of $\{ G_n\alpha^{(2l)}_j \mid j \in D(2l) \}$.
Here note that the conjugate classes are disjoint by Lemma~\ref{disjoint}.
Since pairwise conjugate eigenvalues over $\mathbb{Q}$ have the same multiplicity, we write $m_d$ ($d \in D(2l)$) for the multiplicity of an eigenvalue in $G_n\alpha^{(2l)}_d$.
Let $X_{d'}= m_{d'} + m_{2d'}$ for $d' \in D(l)$. 
Since $T$ is an $n\times n$ matrix,
we have
\begin{align*}
n
&= \sum_{d\in D(2l)} |G_{2l}\alpha_d^{(2l)}|m_d  \\
& = \sum_{d\in D(2l)} |G_{\frac{2l}{d}}\alpha_1^{\left(\frac{2l}{d}\right)}| m_d \\
&= \frac{1}{2} \sum_{d\in D(2l)}\phi\left(\frac{2l}{d}\right)m_d \\
&= \frac{1}{2}\left\{\sum_{d'\in D(l)}\phi\left(\frac{2l}{d'}\right)m_{d'} + \sum_{d'\in D(l)}\phi\left(\frac{2l}{2d'}\right)m_{2d'}\right\},
\end{align*}
where the second and third equalities follow from Lemmas~\ref{w} and~\ref{thnumber}, respectively.
Since $l$ is odd,
Equality~\eqref{0716-1} leads to
\begin{align}
\phi\left(\frac{2l}{d'}\right)&=\phi\left(\frac{l}{d'}\right)  \label{g} 
\end{align}
for $d' \in D(l)$.
Thus, 
\begin{align*}
&\frac{1}{2}\left\{\sum_{d'\in D(l)}\phi\left(\frac{2l}{d'}\right)m_{d'} + \sum_{d'\in D(l)}\phi\left(\frac{2l}{2d'}\right)m_{2d'}\right\} \\
&= \frac{1}{2}\sum_{d'\in D(l)}\phi\left(\frac{l}{d'}\right)(m_{d'} + m_{2d'}) \\
&=  \frac{1}{2}\sum_{d'\in D(l)}\phi\left(\frac{l}{d'}\right)X_{d'}.
\end{align*}
Therefore, we have
\begin{align}
    n = \frac{1}{2}\sum_{d'\in D(l)}\phi\left(\frac{l}{d'}\right)X_{d'}. \label{n}
\end{align}
Also, we write $S^2(\Spec(T))$ for the sum $\sum_{\lambda} a_\lambda \lambda^2$ where $\lambda$ is an eigenvalue of $T$ and $a_\lambda$ is the multiplicity of $\lambda$. We have
\begin{align*}
S^2(\Spec(T)) &= \sum_{d \in D(2l)}S(G_{2l}(\alpha_d^{(2l)})^2)m_d \\
&= \sum_{d \in D(2l)}S(G_{\frac{2l}{d}}(\alpha_1^{(\frac{2l}{d})})^2)m_d \\
&= \frac{1}{4} \sum_{d \in D(2l)}    \left\{ S(G_{\frac{2l}{d}}\zeta_{\frac{2l}{d}}^2) + \phi\left(\frac{2l}{d}\right) \right\} m_d, 
\end{align*}
where the third equality is derived from Corollary~\ref{sps}. 
If $d \in D(2l)$ is even,
it can be displayed as $d=2d'$ for some $d' \in D(l)$.
Then we have
\begin{align*}
\left\{ S(G_{\frac{2l}{d}}\zeta_{\frac{2l}{d}}^2) + \phi\left(\frac{2l}{d}\right) \right\} m_d
&= \left\{ S(G_{\frac{2l}{2d'}}\zeta_{\frac{2l}{2d'}}^2) + \phi\left(\frac{2l}{2d'}\right) \right\} m_{2d'} \\
&= \left\{ S(G_{\frac{l}{d'}}\zeta_{\frac{l}{d'}}^2) + \phi\left(\frac{l}{d'}\right) \right\} m_{2d'} \\
&= \left\{ S(G_{\frac{l}{d'}}\zeta_{\frac{l}{d'}}) + \phi\left(\frac{l}{d'}\right) \right\} m_{2d'},
\end{align*}
where the last equality is derived from Lemma~\ref{zetasquare}.
If $d \in D(2l)$ is odd, we have $d \in D(l)$,
so Lemma~\ref{w} and Equality~\eqref{g} yield 
\[ \left\{ S(G_{\frac{2l}{d}}\zeta_{\frac{2l}{d}}^2) + \phi\left(\frac{2l}{d}\right) \right\} m_d
= \left\{ S(G_{\frac{l}{d}}\zeta_{\frac{l}{d}}) + \phi\left(\frac{l}{d}\right) \right\} m_d \]
Thus, by converting the variable $d$ to $d'$, we have
\begin{align*}
S^2(\Spec(T)) 
&= \frac{1}{4} \sum_{d \in D(2l)}    \left\{ S(G_{\frac{2l}{d}}\zeta_{\frac{2l}{d}}^2) + \phi\left(\frac{2l}{d}\right) \right\} m_d \\
&= \frac{1}{4} \sum_{d' \in D(l)}   \left\{ S(G_{\frac{l}{d'}}\zeta_{\frac{l}{d'}}) + \phi\left(\frac{l}{d'}\right) \right\}   (m_{d'} + m_{2d'}) \\
&= \frac{1}{4} \sum_{d' \in D(l)}   \left\{ S(G_{\frac{l}{d'}}\zeta_{\frac{l}{d'}}) + \phi\left(\frac{l}{d'}\right) \right\} X_{d'} \\
&= \frac{1}{4} \sum_{d' \in D(l)}   \left\{ \mu\left(\frac{l}{d'}\right) + \phi\left(\frac{l}{d'}\right) \right\} X_{d'},
\end{align*}
where the last equality is derived from Lemma~\ref{wa}.
Therefore, we have
\begin{align}
    S^2(\Spec(T))
    = \frac{1}{4} \sum_{d' \in D(l)}   \left\{ \mu\left(\frac{l}{d'}\right) + \phi\left(\frac{l}{d'}\right) \right\} X_{d'} . \label{mphi}
\end{align}
Substituting Equalities~\eqref{n} and~\eqref{mphi} into Lemma~\ref{handshaking}~\eqref{handshaking:1}, we obtain 
\[
\sum_{d' \in D(l)}  \left[ k\left\{ \mu\left(\frac{l}{d'}\right) + \phi\left(\frac{l}{d'}\right) \right\} -2\phi\left(\frac{l}{d'}\right) \right] X_{d'} =0.
\]
Note that $\mu(1)=1, \phi(1)=1$.
We have
\begin{align*}
\sum_{d' \in D(l) \setminus \{ l \} }  \left[ k\left\{ \mu\left(\frac{l}{d'}\right) + \phi\left(\frac{l}{d'}\right) \right\} -2\phi\left(\frac{l}{d'}\right) \right] X_{d'} 
&=  -\left[k\{ \mu(1) + \phi(1) \} -2\phi(1) \right] X_{l} \\
&=  -2(k-1)X_{l}.
\end{align*}
We remark that $X_l$ is positive.
Therefore, we see that the graph does not satisfy
\begin{align} \label{contraposition}
k\left\{ \mu\left(\frac{l}{d'}\right) + \phi\left(\frac{l}{d'}\right) \right\} -2\phi\left(\frac{l}{d'}\right) \geq 0
\quad
\text{for any}
\quad
d'\in D(l) \setminus \{ l \}.
\end{align}
We have
\begin{align*}
\eqref{contraposition}
&\iff
k \geq \frac{2\phi(\frac{l}{d'})}{\mu(\frac{l}{d'}) + \phi(\frac{l}{d'})}
\quad
\text{for any}
\quad
d'\in D(l) \setminus \{ l \} \notag  \\ 
&\iff
k \geq \max\left\{ \frac{2\phi(\frac{l}{d'})}{\mu(\frac{l}{d'}) + \phi(\frac{l}{d'})} \, \middle| \, d' \in D(l) \setminus \{l\}\right\}.
\end{align*}
The value
\[ \frac{2\phi(\frac{l}{d'})}{\mu(\frac{l}{d'}) + \phi(\frac{l}{d'})} = 2-\frac{2\mu(\frac{l}{d'})}{\mu(\frac{l}{d'}) + \phi(\frac{l}{d'})} \]  
takes the maximum when $\phi(\frac{l}{d'})$ takes the minimum and $\mu(\frac{l}{d'}) = -1$.
Since this is attained when $\frac{l}{d'}=p_1$,
we have
\begin{align*}
\max\left\{ 2- \frac{ 2\mu(\frac{l}{d'}) }{\mu(\frac{l}{d'}) + \phi(\frac{l}{d'})} \, \middle| \, d' \in D(l) \setminus \{l\}\right\}
=2+ \frac{ 2 }{\phi(p_1) -1 } 
=2+ \frac{ 2 }{p_1-2}.
\end{align*} 
As a result, \eqref{contraposition} is equivalent to
\begin{equation} \label{0623-1}
k \geq 2+ \frac{ 2 }{p_1-2} .
\end{equation}
This means that if $k$ satisfies~\eqref{0623-1},
then the graph is not periodic.
Therefore,
we conclude that $k \leq 3$ if $p_1 = 3$, and $k=2$ if $p_1 \geq 5$.
Therefore, either $k=3$ and $l$ is a multiple of $3$, or $k=2$ holds. In particular, if $k=2$, then $G$ is the cycle graph,
and hence Lemma~\ref{0707-1} implies that
the cycle graph $G$ has $2l$ vertices.
\end{proof}

It should be noted that Theorem~\ref{0629-1} refers only to the necessary condition for $2l$-periodic regular graphs.
We already know that
the cycle graph with $2l$ vertices is $2l$-periodic,
but we know nothing so far on the situation where $k=3$ and $l$ is a multiple of $3$.
The existence of such graphs is our future work:

\begin{question}
Let $l$ be an odd integer that is a multiple of $3$.
Do $2l$-periodic $3$-regular graphs exist?
\end{question}

Note that only the complete bipartite graph $K_{3,3}$
is currently discovered as periodic 3-regular graphs,
and its period is $4$~\cite{YNIE}.

\section{Summary and $4s$-periodic regular graphs} \label{0630-1}
Once again, let $l$ be an odd integer.
In this paper, we proved that only the $2l$-periodic regular graphs are cycle graphs $C_{2l}$ with $2l$ vertices
if $l$ is not a multiple of $3$.
There could potentially be $2l$-periodic $3$-regular graphs
only if $l$ is a multiple of $3$.

Incidentally, we did not make any mention of the case where $l$ is even in this paper.
In the remainder, we will mention periodic regular graphs whose period is a multiple of $4$.
Let $s$ be a positive integer.
In fact, many examples of $4s$-periodic regular graphs
have already been found,
for example, $12$-periodic regular graphs are summarized
in Table~2 of~\cite{S}.
Moreover, if $G$ is a $4s$-periodic graph,
then $G \otimes J_t$, which is the Kronecker product of $G$ and the square all-one matrix $J_t$ of size $t$,
is also a $4s$-periodic graph.
In particular, $C_{4s} \otimes J_t$ is a $4s$-periodic regular graph for any $t$.
For small $s$, we know the following.
The only $4$-periodic graphs are complete bipartite graphs~\cite{Y1}.
For $8$-periodic regular graphs,
it is known that graphs are constructed from transversal designs,
besides $C_{8} \otimes J_t$ (see Table~3 in~\cite{S}).
For $12$-periodic regular graphs,
see Table~2 in~\cite{S} as already mentioned.
For $20$-periodic regular graphs,
some examples have been found besides $C_{20} \otimes J_t$,
and they appear in Section~9 of~\cite{C}.
At least for small $s$,
there are sporadic examples of $4s$-periodic regular graphs
besides $C_{4s} \otimes J_t$.
We expect that characterizing $4s$-periodic regular graphs will be difficult,
but we believe it will also be an interesting problem.



\section*{Acknowledgements}
We would like to thank Professor Norio Konno for his fruitful comments and helpful advice.
S.K. is supported by JSPS KAKENHI (Grant Number JP20J01175).
K.Y is supported by JSPS KAKENHI (Grant Number JP21J14427).

\end{document}